\renewcommand{\footnote}{\endnote}
\newtheorem{theorem}{Theorem}
 \newtheorem{lemma}[theorem]{Lemma}
\newtheorem{proposition}[theorem]{Proposition}
\newtheorem*{definition}{Definition}
\newtheorem{remark}{Remark}
\newtheorem{corollary}[theorem]{Corollary}
\begin{document}
\title{Periodic billiard trajectories in polyhedra}
\author{Bedaride nicolas}
\address{Fédération de recherche des unités de mathématiques de Marseille,
Laboratoire d'analyse, topologie et probabilités, UMR 6632 ,
Avenue Escadrille Normandie Niemen 13397 Marseille cedex 20,
France}

\email{nicolas.bedaride@univ-cezanne.fr}
\date{}
\begin{abstract}
We consider the billiard map inside a polyhedron.
We give a condition for the stability of the periodic
trajectories. We apply this result to the case of
the tetrahedron. We deduce  the existence of an open set of tetrahedra
which have a periodic orbit of length four (generalization of Fagnano's
orbit for triangles), moreover we can study completly the orbit of points
along this coding.
\end{abstract}
\maketitle
\bibliographystyle{plain}

\section{Introduction}
We consider the billiard problem inside polyhedron. We start with
a point of the boundary of the polyhedron and we move along a
straight line until we reach the boundary, where there is
reflection according to the mirror law. A famous example of a
periodic trajectory is Fagnano's orbit: we consider an acute
triangle and the foot points of the altitudes. Those points form a
billiard trajectory which is periodic \cite{Be2}.

For the polygons some results are known. For example we know that
there exists a periodic orbit in all rational polygons(the angles
are rational multiples of $\pi$), and recently Schwartz has proved
in \cite{Schw.05} the existence of a periodic billiard orbit in
every obtuse triangle with angle less than $100$ degrees . A good
survey of what is known about periodic orbits can be found in the
article \cite{Ga.St.Vo} by Galperin, Stepin and Vorobets or in the
book of Masur, Tabachnikov \cite{Ma.Tab}. In this article they
define the notion of stability: They consider the trajectories
which remain periodic if we perturb the polygon. They find a
combinatorial rule which characterize the stable periodic words.
Moreover they find some results about periodic orbits in obtuse
triangles.

The study of the periodic orbits has also been done by famous
physicist. Indeed Glashow and Mittag prove that the billiard
inside a triangle is equivalent to the system of three balls on a
ring, \cite{Gla.Mi}. Some others results can be found in the
article of Ruijgrok and Rabouw \cite{Ra.Rui}. In the polyhedral
case much less is known. The result on the existence of periodic
orbit in a rational polygon can be generalized, but it is less
important, because the rational polyhedra are not dense in the set
of polyhedra. There is no other general result, the only result
concerns the example of the tetrahedron. Stenman \cite{Sten}
shows, that a periodic word of length four exists in a regular
tetrahedron.

The aim of this paper is to find Fagnano's orbit in a regular
tetrahedron and to obtain a rule for the stability of periodic
words in polyhedra. This allows us to obtain a periodic orbit in
each tetrahedron in a neighborhood of the regular one. Moreover we
give examples which prove that the trajectory is not periodic in
all tetrahedra, and we find bounds for the size of the
neighborhood. In the last section we answer to a question of
Galperin, Kruger, Troubetzkoy \cite{Ga.Kr.Tr} by an example of
periodic word $v$ with non periodic points inside its beam.

\section{Statement of results}
The definitions are given in the following sections:

In Section 4 we prove the following result. Consider a periodic biliard orbit coded by the word $v$, then $S_v$ is a certain isometry derived from the combinatorics of the path coded by $v$.
\begin{theorem}
Let $P$ be a polyhedron and $v$ the prefix of a periodic word of
period $|v|$ in $P$.

 If the period is an even number, and $S_v$ is different
from the identity, then $v$ is stable.\\
If the period is odd, then the word is stable if and only if $S_v$
is constant as a function of $P$.
\end{theorem}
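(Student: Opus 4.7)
I would work entirely in the unfolded picture. Unfolding the trajectory along $v=v_1\cdots v_n$ turns it into a straight segment in $\mathbb{R}^3$; writing the refolding isometry as $S_v(x)=L_vx+t_v$, a periodic orbit with code $v$ corresponds to a starting point $x$ on face $v_1$ whose trajectory direction $\vec d=(S_v(x)-x)/\|S_v(x)-x\|$ is preserved by $L_v$, equivalently $S_v(x)-x\in E:=\ker(L_v-\mathrm{Id})$, and whose straight segment crosses the unfolded faces in the prescribed order. The face-crossing requirement is an open condition and persists automatically; the question reduces to whether the linear condition $(L_v-\mathrm{Id})x+t_v\in E$ can be solved continuously for all nearby polyhedra $P'$.

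\textbf{Even period.} Here $L_v\in SO(3)$, and any element of $SO(3)$ has $1$ as an eigenvalue: $E$ is the rotation axis if $L_v\neq\mathrm{Id}$, or all of $\mathbb{R}^3$ if $L_v=\mathrm{Id}$. The hypothesis $S_v\neq\mathrm{Id}$ rules out the degenerate case where both $L_v=\mathrm{Id}$ and $t_v=0$. In the remaining subcases, $(L_v-\mathrm{Id})x+t_v\in E$ has an affine line of solutions (the orbit itself, parametrized by translation along $\vec d$). A small perturbation gives $L_v'\in SO(3)$ close to $L_v$ with $\dim E'\geq 1$, and the implicit function theorem produces a continuously varying solution $x'$ on the perturbed face $v_1'$, yielding the required periodic orbit on $P'$.

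\textbf{Odd period.} Now $L_v\in O(3)^-$, whose spectrum is $\{-1,e^{i\theta},e^{-i\theta}\}$, so $1$ is an eigenvalue iff $\theta=0$, i.e.\ iff $L_v$ is a plane reflection (equivalently $\operatorname{tr}(L_v)=1$). This is a codimension-one condition inside $O(3)^-$, and a generic perturbation of $P$ forces $\theta\neq 0$, so $E'=\{0\}$, killing every candidate trajectory direction. Stability of $v$ therefore demands that $L_v$ remain a pure reflection under every nearby perturbation of $P$; this invariance is precisely the content of the hypothesis "$S_v$ constant as a function of $P$." Conversely, when $S_v$ does not vary with $P$ the reflection plane is fixed and one locates $x'\in P'$ on it by the same transverse-intersection argument as in the even case.

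\textbf{Main obstacle.} The most delicate step is the odd "only if": showing that every genuine $P$-dependence of $S_v$ can be activated by some explicit perturbation that moves $\operatorname{tr}(L_v)$ away from $1$, destroying the reflection structure of $L_v$ and with it any periodic orbit coded by $v$. This forces an infinitesimal calculation tracking how the unit normals of the faces $v_1,\ldots,v_n$ deform under perturbation of $P$ and how those deformations propagate through the composition of reflections defining $S_v$.
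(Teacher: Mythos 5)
Your overall strategy (unfold, reduce periodicity to the eigenvalue-one eigenspace of the linear part plus an open admissibility condition, then track that eigenspace continuously under perturbation) is the same as the paper's; the paper makes the continuity of the axis concrete via Rodrigue's formula where you invoke the implicit function theorem. But there is one concrete error in your even case. In the paper's notation $S_v$ is the \emph{linear} part (the product of the linear reflections $S_{v_i}$), while $s_v$ is the affine map; the hypothesis ``$S_v\neq\mathrm{Id}$'' therefore excludes $L_v=\mathrm{Id}$ outright. You read it as excluding only the affine identity (``both $L_v=\mathrm{Id}$ and $t_v=0$''), and so you keep the subcase $L_v=\mathrm{Id}$, $t_v\neq 0$ among your ``remaining subcases.'' Your argument fails exactly there: when $L_v=\mathrm{Id}$ the fixed space $E$ is all of $\mathbb{R}^3$, and an arbitrarily small perturbation turns $L_v$ into a genuine rotation whose axis has no reason to be close to the direction $t_v$ --- the eigenvector of eigenvalue $1$ is \emph{not} a continuous function of the rotation at the identity, so the implicit function theorem gives you nothing. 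The perturbed orbit would have to travel along the new axis, which is generically not admissible for $v$, and the orbit is destroyed. This is not hypothetical: the paper cites Vorobets for the fact that words with $S_v=\mathrm{Id}$ (translation-type orbits) are always unstable. Your claim of stability in that subcase is therefore false; the theorem is saved only because the hypothesis, correctly read, never lets that subcase arise.

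On the odd case, the ``obstacle'' you flag is real and is not discharged by the codimension count alone: a priori $S_v^Q$ could be non-constant while remaining a reflection for every nearby $Q$ (the reflection plane rotating with $Q$), in which case the word would still be stable and the ``only if'' direction would fail as you have set it up. One genuinely needs the claim that non-constancy forces $\operatorname{tr}(L_v)$ off $1$ for some arbitrarily close $Q$; the paper asserts essentially this without carrying out the infinitesimal computation you correctly identify as necessary, so on this step your proposal and the paper are equally incomplete.
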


In Section 5 we prove:

\begin{theorem}
Assume the billiard map inside the tetrahedron is coded by
$a,b,c,d$.\\
 $\bullet$ The word $abcd$ is periodic for all the
tetrahedra in a neighborhood of the regular one.(This orbit will be denoted as Fagnano's orbit in the following).\\
$\bullet$ In any right tetrahedron Fagnano's orbit does not exist.
There exists an open set of obtuse tetrahedron where Fagnano's
orbit does not exist.
\end{theorem}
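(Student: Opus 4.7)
\emph{First bullet.} Since the Fagnano word $v=abcd$ has even length $4$, Theorem~1 reduces its stability to two verifications. The first is that $abcd$ really is periodic in the regular tetrahedron; this is essentially Stenman's construction, and may also be read off from the cyclic symmetry of the four faces. The second is that the isometry $S_{abcd}$ is not the identity at the regular tetrahedron. Placing the regular tetrahedron in convenient coordinates (for instance vertices at $(1,1,1)$, $(1,-1,-1)$, $(-1,1,-1)$, $(-1,-1,1)$), each face reflection is an explicit affine involution of $\mathbb{R}^3$, and one multiplies them in the order prescribed by $a,b,c,d$. Since the product of an even number of reflections is orientation-preserving, $S_{abcd}$ is a rotation or screw motion, and direct computation shows its linear part is not the identity. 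Theorem~1 then yields an open neighborhood of the regular tetrahedron inside which $abcd$ remains periodic.

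\emph{Right tetrahedra.} Let $T$ be a right tetrahedron: three of its faces meet at a common vertex $O$ in mutually perpendicular dihedral angles, and the fourth face is oblique. I would choose coordinates at $O$ along the three perpendicular edges, so that reflection in each of the three coordinate faces is a diagonal involution of $\mathbb{R}^3$. A cyclic ordering $abcd$ places the oblique face in one of three positions, and in each case one computes $S_{abcd}$ by multiplying the four affine involutions. The goal is to show that the resulting fixed-point set is either empty in the interior of the starting face or forces the corresponding straight segment in the unfolding to exit one of the unfolded copies, so that the candidate $4$-periodic orbit cannot be realised. This argument is geometric rather than perturbative, so it rules out Fagnano's orbit in every right tetrahedron simultaneously.

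\emph{Open set of obtuse tetrahedra.} For the last statement I would degenerate an obtuse tetrahedron toward a right one. The obstruction just identified is quantitative: the intersection of the relevant unfolded straight line with the closing face varies continuously with the tetrahedron, and at the right-tetrahedron limit it is strictly outside the face (or the fixed-point equation for $S_v$ fails by a positive amount). By continuity, the same obstruction persists on an open neighborhood of the right tetrahedron inside the parameter space of obtuse tetrahedra, which gives the desired open set.

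\emph{Main obstacle.} Conceptually everything rests on Theorem~1 and Stenman's orbit; the work is computational. The real difficulty is the case analysis for right tetrahedra, where one must carry out the four-reflection product in coordinates, determine the unfolded segment explicitly, and check in every cyclic ordering that it fails to realise a true billiard trajectory. One must also pin down which labelling of the four faces corresponds to Fagnano's orbit under perturbation, so that the same combinatorial word is being analysed in both bullets of the theorem.
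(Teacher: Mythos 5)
Your first bullet follows the paper's route: verify the orbit explicitly in the regular tetrahedron (Stenman's computation, or the eigenvector of $S_{abcd}$ and the intersection of the screw axis with the face), check $S_{abcd}\neq Id$, and invoke the even-period stability theorem. That part is fine. The other two bullets, however, contain genuine gaps. For right tetrahedra you only describe a computation to be done ("the goal is to show\dots"), and you miss the mechanism that actually makes the argument work: with the three mutually perpendicular faces taken as coordinate planes, $S_bS_cS_d=-Id$, so $S_v=-S_a$ is a half-turn about the normal to the oblique face $a$. Since $s(A)=s_a(A)$, the midpoint of $[A\,s(A)]$ lies on the screw axis and the axis direction is the normal to $a$; hence the axis is precisely the perpendicular from the vertex $A$ to the face $BCD$, so the would-be periodic trajectory passes through the vertex $A$ and is not a legitimate billiard orbit. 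Without this identification your case analysis has no conclusion to point to.

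The third bullet is where your plan would actually fail. You propose to perturb away from a right tetrahedron and argue the obstruction persists by continuity. But the obstruction at a right tetrahedron is a degenerate incidence — the orbit hits a vertex — and incidence with a vertex is a codimension-one condition, not an open one: under a generic perturbation the axis moves off the vertex and nothing a priori prevents the orbit from reappearing. (Compare the first bullet: near the regular tetrahedron existence is the open condition.) The paper instead exhibits a concrete obtuse tetrahedron, $A(0,0,0)$, $B(2,0,0)$, $C(1,1,0)$, $D(3,2,1)$, computes the screw motion $s_v$, and shows its axis meets the plane of the face $BCD$ at a point strictly outside the triangle. Being a strict exclusion, this failure is stable under perturbation, which is what yields the open set. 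You need such a quantitatively robust obstruction, not the vertex collision of the right case.
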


The last section of this article is devoted to the study of the
first return map of the billiard trajectory.
\section{Background}\label{pe2}
\subsection{Isometries}
In this part we will recall some usual facts about affine
isometries of $\mathbb{R}^3$. A general reference is \cite{Be2}.

To an affine isometry $a$, we can associate an affine map $f$ and
a vector $u$ such that: $f$ has a fixed point or is equal to the
identity, and such that $a=t_u\circ f=f\circ t_u$ where $t_u$ is
the translation of vector $u$. Then $f$ can be seen as an element
of the orthogonal group $O_3(\mathbb{R})$.
\begin{definition}
First assume that $f$ belongs to $O_3(+)$, and is not equal to the
identity. If $u$ is not an eigenvector of $f$ then $a$ is called
an affine rotation. The axis of $a$ is the set of invariants
points. If $u$ is an eigenvector of $f$, $a$ is called a screw
motion.
In this case the axis of $a$ is the axis of the affine rotation.\\
If $f$, in $O_2(-)$  or $O_3(-)$, is a reflection and $u$ is an
eigenvector of $f$ with eigenvector 1, then $a$ is called a glide
reflection.
\end{definition}

$\bullet$ {\bf Rodrigue's formula.} To finish this subsection we
recall Rodrigue's formula which give the axis and the angle of the
rotation product of two rotations. It can be done by the following
method.
\begin{lemma}\cite{Be3}
We assume that the two rotations are not equal to $Id$, or to a
rotation of angle $\pi$. Let $\theta,u$ the angle and axis of the
first rotation, we denote by $t$ the vector $\tan{\theta/2}.u$ and
$t'$ the associated vector for the second rotation. Then the
product of the two rotations is given by the vector $t''$ such
that
$$t''=\frac{1}{1-t.t'}(t+t'+t\wedge t').$$
\end{lemma}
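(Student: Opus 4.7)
My plan is to use the quaternion representation of the rotation group $SO(3)$. Identifying $\mathbb{R}^3$ with the pure imaginary quaternions, every rotation of angle $\theta$ about the unit axis $u$ corresponds (up to sign) to the unit quaternion $q = \cos(\theta/2) + \sin(\theta/2)\, u$, and, crucially, the composition of two rotations corresponds to the product of the associated quaternions. Observe that the vector $t = \tan(\theta/2)\, u$ is precisely the ratio of the vector part to the scalar part of $q$, which is what makes the tangent-half-angle parametrisation natural in this setting.

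First I would write the two rotations as $q = \cos(\theta/2) + \sin(\theta/2)\, u$ and $q' = \cos(\theta'/2) + \sin(\theta'/2)\, u'$, then expand their product using the identity $u u' = -u \cdot u' + u \wedge u'$, valid for pure imaginary quaternions. This yields a quaternion whose scalar part is $\cos(\theta/2)\cos(\theta'/2) - \sin(\theta/2)\sin(\theta'/2)\,(u \cdot u')$ and whose vector part is an explicit linear combination of $u$, $u'$, and $u \wedge u'$ with coefficients involving products of sines and cosines of the half-angles. Identifying this product with $\cos(\theta''/2) + \sin(\theta''/2)\, u''$, the vector $t'' = \tan(\theta''/2)\, u''$ is read off as the ratio of the vector part to the scalar part; dividing numerator and denominator by $\cos(\theta/2)\cos(\theta'/2)$ makes the half-angle tangents appear, and rewriting the resulting combinations in terms of $t$ and $t'$ produces the announced formula. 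The sign in front of $t \wedge t'$ is then just a matter of the convention chosen for which rotation acts first.

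The only delicate point is checking that the hypotheses match the domain of validity of both sides. Excluding the identity is needed for the axis of each input rotation to be well defined, and excluding a rotation of angle $\pi$ ensures $\tan(\theta/2)$ is finite, so that $t$ and $t'$ exist at all. Furthermore, the denominator $1 - t \cdot t'$ vanishes exactly when the scalar part of the product quaternion is zero, i.e. when the composition itself has angle $\pi$; in that case $t''$ would be infinite and the formula is meaningless, but again this case lies outside the regime of the lemma. Thus the stated hypotheses are the minimal ones under which both sides are defined, and the quaternion computation provides the desired identification between them.
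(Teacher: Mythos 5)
The paper offers no proof of this lemma at all --- it is simply quoted from the reference \cite{Be3} --- so there is nothing internal to compare your argument against. Your quaternion argument is the standard and correct way to establish Rodrigues' composition formula: writing each rotation as $q=\cos(\theta/2)+\sin(\theta/2)\,u$, multiplying, and taking the ratio of vector part to scalar part does produce $t''=(t+t'+t\wedge t')/(1-t\cdot t')$ after dividing through by $\cos(\theta/2)\cos(\theta'/2)$, with the orientation of the wedge term fixed by which rotation acts first, exactly as you say. The proposal is a sketch rather than a full computation, but every step you describe is routine and goes through. One small correction to your closing remark: the stated hypotheses are \emph{not} quite the minimal ones under which both sides are defined. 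Excluding the two input rotations from being half-turns does not prevent their \emph{product} from being a half-turn (e.g.\ two quarter-turns about the same axis), in which case $1-t\cdot t'=0$ and $t''$ is undefined; this degenerate case is a gap in the lemma's statement as quoted, not in your proof, but you should not claim the hypotheses already cover it.
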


\subsection{Combinatorics}
\begin{definition}
Let $\mathcal{A}$ be a finite set called the alphabet. By a
language $L$ over $\mathcal{A}$ we mean always a factorial
extendable language: a language is a collection of sets
$(L_n)_{n\geq 0}$ where the only element of $L_0$ is the empty
word, and each $L_n$ consists of words of the form $a_1a_2\dots
a_n$ where $a_i\in\mathcal{A}$ and such that for each $v\in L_n$
there exist $a,b\in\mathcal{A}$ with $av,vb\in L_{n+1}$, and for
all $v\in L_{n+1}$ if $v=au=u'b$ with $a,b\in\mathcal{A}$ then
$u,u'\in L_n$.\\
If $v=a_1a_2\dots a_n$ is a word, then for all $i\leq n$, the word
$a_1\dots a_i$ is called a prefix of $v$.
\end{definition}

\section{Polyhedral billiard}\label{pe3}
\subsection{Definition}
We consider the billiard map inside a polyhedron $P$. This map is
defined on the set $X\subset\partial{P}\times\mathbb{PR}^3$, by
the following method:

$T(m,\theta)=(m',\theta')$ if and only if $mm'$ is colinear to
$\theta$, and $\theta'=S\theta$, where $S$ is the linear
reflection over the face which contains $m'$. The map is not
defined if $m+\mathbb{R}^*\theta$ intersects $\partial{P}$ on an
edge.
$$T:X\rightarrow \partial{P}\times\mathbb{PR}^3$$
$$T:(m,\theta)\mapsto (m',\theta')$$
We identify $\mathbb{PR}^3$ with the unit vectors of
$\mathbb{R}^3$ in the preceding definition.

\subsection{Coding}
We code the trajectory by the letters from a finite alphabet where
we associate a letter to each face.

We call $s_i$ the reflection in the face $i$, $S_i$ the linear
reflection in this face. If we start with a point of direction
$\theta$ which has a trajectory of coding $v=v_{o}..v_{n-1}$ the
image of $\theta$ is: $S_{v_{n-1}}...S_{v_{1}}\theta$. Indeed the
trajectory of the point first meets the face $v_1$, then the face
$v_2$ {\it et caetera}.

If it is a periodic orbit, it meets the face $v_0$ after the face $v_{n-1}$
and we have:\\
 $S_{v_{0}}S_{v_{n-1}}\dots S_{v_{1}}\theta=\theta=S_v\theta$,
$S_v$ is the product of the $S_i$, and $s_v$ the product of the $s_i$.

We recall a result of \cite{Ga.Kr.Tr}: the word $v$ is the prefix
of a periodic word of period $|v|$ if and only if there exists a
point whose orbit is periodic and has $v$ as coding.

\begin{remark}\label{perrme}
If a point is periodic, the initial direction is an eigenvector of
the map $S_v$ with eigenvalue 1. It implies that in $\mathbb{R}^3$, for a
periodic word of odd period, $S$ is a reflection.
\end{remark}

\begin{definition}
Let $v$ be a finite word.
The beam associated to $v$ is the set of
$(m,\theta)$ where $m$ is in the face $v_o$ (resp. edge), $\theta$ a
vector of $\mathbb{R}^3$ (resp. $\mathbb{R}^2$),
such that the orbit of $(m,\theta)$ has
a coding which begins with $v$. We denote it $\sigma_v$.

A vector $u$ of $\mathbb{R}^3$ (resp $\mathbb{R}^2$) is admissible
for $v$, with base point $m$, if there exists a point $m$ in the face (edge)
$v_0$ such that $(m,u)$ belongs to the beam of $v$.
\end{definition}
\begin{lemma}\label{pergeoiso}
Let $s$ be an isometry of $\mathbb{R}^3$ not equal to a
translation. Let $S$ be the associated linear map and $u$ the
vector of translation. Assume $s$ is either a screw motion or a
glide reflection. Then the points $n$ which satisfy
$\overrightarrow{ns(n)}\in\mathbb{R}u,$ are either on the axis of
$s$ (if $S$ is a rotation), or on the plane of reflection. In this
case the vector $\overrightarrow{ns(n)}$ is the vector of the
glide reflection.
\end{lemma}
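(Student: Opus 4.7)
The plan is to use the canonical decomposition $s = f \circ t_u = t_u \circ f$ recalled in the preceding definition, where $f$ is an affine isometry with a fixed point and linear part $S$, and where $u$ is an eigenvector of $S$ with eigenvalue $1$. With this decomposition one has
\[
\overrightarrow{n\, s(n)} \;=\; s(n)-n \;=\; (f(n)-n) + u,
\]
so the condition $\overrightarrow{n\, s(n)} \in \mathbb{R}u$ is equivalent to $f(n)-n \in \mathbb{R}u$.

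The key observation I would then make is that in both cases the displacement $f(n)-n$ is orthogonal to $u$, so the membership in $\mathbb{R}u$ forces $f(n)-n=0$. I would split this into the two cases handed to us:

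\textbf{Screw motion.} Here $f$ is an affine rotation with axis $A$, and $u$ is directed along $A$. Decomposing $n=n_A+n_\perp$ according to the orthogonal splitting induced by $A$, one gets $f(n)=n_A+R(n_\perp)$, where $R$ is the planar rotation in the plane orthogonal to $A$. Thus $f(n)-n = R(n_\perp)-n_\perp$ lies in that orthogonal plane and is therefore orthogonal to $u$. The only way it can lie in $\mathbb{R}u$ is for it to vanish, which by $f\neq\mathrm{Id}$ means $n_\perp=0$, i.e.\ $n\in A$.

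\textbf{Glide reflection.} Here $f$ is a reflection through a plane $\Pi$, and $u\in\Pi$. Decomposing $n=n_\Pi+n_\perp$ this time gives $f(n)=n_\Pi-n_\perp$, hence $f(n)-n=-2n_\perp$ is orthogonal to $\Pi$, hence to $u$. Again this forces $n_\perp=0$, so $n\in\Pi$.

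In either case, once $f(n)=n$ the identity $\overrightarrow{n\, s(n)} = (f(n)-n)+u = u$ gives the last assertion (that the displacement equals the screw/glide vector). There is no real obstacle here; the only point requiring minor care is the orthogonality between the displacement $f(n)-n$ and the invariant direction $u$, which is precisely what makes the eigenvector hypothesis on $u$ do its job.
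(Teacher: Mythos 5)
Your argument is correct, and it reaches the conclusion by a genuinely different route from the paper's. Both proofs start from the same displacement formula (with the origin $o$ placed on the axis or on the reflecting plane, the paper's $\overrightarrow{n\,s(n)}=(S-Id)X+Y$ is exactly your $(f(n)-n)+u$), but they diverge immediately afterwards: the paper applies $S$ to the eigenvector condition to get the derived equation $(S-Id)^2X=-(S-Id)Y$ and then solves it in explicit coordinates adapted to the axis (block form $\bigl(\begin{smallmatrix}R&0\\0&1\end{smallmatrix}\bigr)$ with $Y'=0$, forcing $X'=0$) and to the plane (where it reduces to $4x=0$). You instead make the single structural observation that the image of $S-Id$ is orthogonal to the invariant direction $u$ — the rotation displaces points within the plane orthogonal to the axis, the reflection displaces them along the normal — so that $f(n)-n\in\mathbb{R}u$ forces $f(n)=n$ outright, and then invertibility of $R-Id$ on the rotation plane (equivalently $-2\neq 0$ in the reflection case) pins $n$ to the axis or plane. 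Your version is coordinate-free, avoids the squared equation entirely, and makes the final assertion $\overrightarrow{n\,s(n)}=u$ immediate; the orthogonality claim it hinges on is correctly justified in both of your cases, since the eigenspace decomposition of the orthogonal map $S$ is orthogonal. Both arguments ultimately rest on the same nondegeneracy input, namely $S\neq\mathrm{Id}$ transverse to its fixed set.
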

\begin{proof}
We call $\theta$ the eigenspace of $S$ related to the eigenvalue one.
We have $s(n)=s(o)+S\overrightarrow{on}$ where $o$, the origin of the
base will be chosen later.
Elementary geometry yields
$\overrightarrow{ns(n)}=(S-Id)X+Y$ (where $X=\overrightarrow{on}$,
$Y=\overrightarrow{os(o)}$)
is inside the space $\theta$.

The map $s$ has no fixed point by assumption, thus $\overrightarrow{ns(n)}$
is non-zero.
The condition gives that $(S-Id)X+Y$ is an eigenvector
of $S$ associated to the eigenvalue one. Thus it implies
$$S((S-Id)X+Y)=(S-Id)X+Y,$$
\begin{equation}\label{eqgeoper}
(S-Id)^2X=-(S-I)Y.
\end{equation}
We consider first the case $detS>0$. We choose $o$ on the
axis of $s$.
Then $\theta$ is
a line, we call the direction of the line by the same name.
Since $detS>0$ we have $S\in O_3(+)$ and thus in an appropriate basis
$S$ has the following form
$\begin{pmatrix}
R & 0 \\ 0 & 1
\end{pmatrix},$
where $R$ is a matrix of rotation of $\mathbb{R}^2$. The preceding
equation is equivalent to
$$(R-Id)^2X'=-(R-Id)Y'.$$
where $X'$ is the vector of $\mathbb{R}^2$ such that $X=
\begin{pmatrix}
X' \\ x
\end{pmatrix}$ in this basis.
Furthermore since $S$ is a screw motion with axis
$\begin{pmatrix}0\\0\\1\end{pmatrix}$ in these
coordinates, $Y$ has the following coordinates
$\begin{pmatrix} Y' \\ y\end{pmatrix}$ where $Y'=0$. Since
$S\neq Id$ , $R-Id$ is invertible and thus $X'=0$.
Thus the vectors $X$ solutions of this equation are collinear to the axis.\\

Consider now the case $det(S)<0$, by assumption $S$ is a reflection,
it implies that the eigenspace related to one is a plane. We will solve
Equation \ref{eqgeoper}, we keep notation
$X=\begin{pmatrix}X' \\ x\end{pmatrix}, Y=\begin{pmatrix}Y' \\ y\end{pmatrix}$.

We can assume that $o$ is on the plane of reflection. Moreover we can
choose the coordinates such that that this plane is orthogonal to the line
$\mathbb{R}\begin{pmatrix}0\\0\\1\end{pmatrix}$.
It implies that $S=\begin{pmatrix}1 & 0 &0\\ 0&1&0\\0&0&-1\end{pmatrix},$
and $y=0$. The equation \ref{eqgeoper} becomes $4x=0$. It implies that
$X$ is on the plane of reflection. Since $s$ is a glide reflection, the last
point becomes obvious.
\end{proof}


\begin{proposition}\label{percrucial}
Let $P$ a polyhedron, the following properties are equivalent.\\
(1) A word $v$ is the prefix of a periodic word with period $|v|$.\\
(2) There exists $m\in v_0$ such that
$\overrightarrow{s_{v}(m)m}$ is admissible with base point $m$ for $vv_{0}$,
and $\theta=\overrightarrow{s_{v}(m)m}$ is such that $S\theta=\theta$.
\end{proposition}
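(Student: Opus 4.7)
The proof rests on the unfolding construction. For $k \geq 1$ let $h_k := s_{v_1} \circ s_{v_2} \circ \cdots \circ s_{v_k}$; geometrically $h_k$ is the affine isometry that identifies the $k$-th unfolded copy of $P$ with $P$ itself, and the unfolded image of the $k$-th bounce point $m_k$ is $h_k(m_k)$. Since each $s_{v_i}$ is an involution, a direct inversion gives $h_n = s_{v_1} \cdots s_{v_{n-1}} s_{v_0} = s_v^{-1}$ (with $v_n := v_0$), which is the main algebraic link between $s_v$ and the unfolding. I will also use an elementary identity: for any affine isometry $g(x) = Gx + \tau$ one has $G(g^{-1}(m) - m) = m - g(m)$ for every $m$. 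Applied to $g = s_v$, this shows that whenever $S_v \theta = \theta$ the two vectors $h_n(m) - m = s_v^{-1}(m) - m$ and $\overrightarrow{s_v(m)m} = m - s_v(m)$ coincide.

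For (1) $\Rightarrow$ (2), suppose $(m, \theta_0)$ is a periodic orbit of period $n$ with coding $v$ and base point $m \in v_0$. Extending the coding periodically shows it begins with $vv_0$, so $\theta_0$ is admissible for $vv_0$ with base $m$, and Remark \ref{perrme} yields $S_v \theta_0 = \theta_0$. Unfolding, the trajectory is the straight segment from $m$ to $\tilde{m}_n = h_n(m_n) = h_n(m)$, hence $\theta_0$ is a positive multiple of $h_n(m) - m$; by the identity above, $\overrightarrow{s_v(m)m}$ is parallel to $\theta_0$ and inherits both admissibility for $vv_0$ and the eigenvector condition.

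For (2) $\Rightarrow$ (1), set $\theta := \overrightarrow{s_v(m)m}$, so that by the identity $\theta = h_n(m) - m$. Admissibility for $vv_0$ gives a genuine billiard trajectory from $(m, \theta)$ whose $n$-th bounce point $m_n$ lies on $v_0$; unfolded, this trajectory is a segment from $m$ to $\tilde{m}_n = h_n(m_n)$ lying on the line through $m$ with direction $\theta$. That line meets the face $h_n(v_0)$ at $h_n(m)$, and transversally since $\theta$ cannot be tangent to $v_0$ (otherwise the first segment would skim along the starting face). Hence $\tilde{m}_n = h_n(m)$ and $m_n = m$; together with $S_v \theta = \theta$ this closes the orbit into a period-$n$ trajectory with coding $v$. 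The only delicate point is this transversality argument; everything else is bookkeeping via unfolding and the algebraic identity above.
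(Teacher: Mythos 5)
Your proof is correct and is essentially the paper's own argument in different clothing: the paper's inductive ``claim'' that the vector from $s_v(m)$ to $T^{|v|}(m,\theta)$ is parallel to the outgoing direction is exactly your unfolding identity $\tilde m_n=h_n(m_n)$ transported by the isometry $s_v=h_n^{-1}$ (and your transversality step is sound, since $S_v\theta=\theta$ makes tangency to $v_0$ and to $h_n(v_0)$ equivalent). The only cosmetic differences are that you make the unfolding isometries and the identity $h_n=s_v^{-1}$ explicit, and that you skip the appeal to Lemma \ref{pergeoiso}, which the paper uses only to locate $m$ on the axis or reflection plane rather than for the equivalence itself.
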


\begin{remark}
Assume $|v|$ is even. In the polygonal case the
matrix $S_v$ can only be the identity, thus $s_v$ is a translation.
We see by unfolding that $s_v$ can not have a fixed point, thus in the
polyhedral case $s_v$ is either a translation or a screw motion or a
glide reflection.
If we do not assume the admissibility in condition $(2)$ it is not
equivalent to condition ($1$) as can be seen in a obtuse triangle, or a
right prism above the obtuse triangle and the word $abc$.
\end{remark}
\begin{figure}[hbt]
\begin{center}
\includegraphics[width= 4cm]{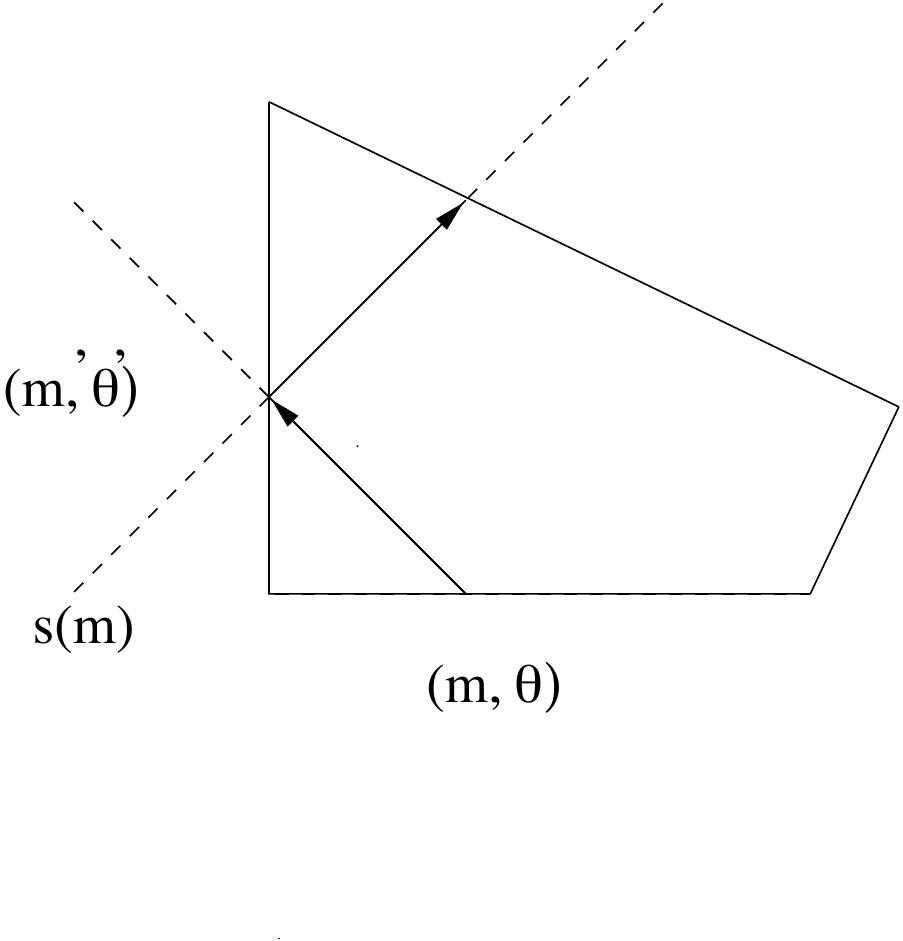}
\caption{Billiard orbit and the associated map}
\label{perfig2}
\end{center}
\end{figure}
 \begin{proof}

First we claim the following fact. The vector connecting
$T^{|v|}(m,\theta)$ to $s_{v}(m)$ is parallel to the direction of
$T^{|v|}(m,\theta)$. For $|v|=1$ if the billiard trajectory goes from
$(m,\theta)$ to $(m',\theta')$ without
reflection between, then the direction $\theta'$ is parallel to
$\overrightarrow{s(m)m'}$, where $s$ is the reflection over the face of
$m'$ see Figure \ref{perfig2}. Thus the claim follows combining this
observation with an induction
argument.\\

Next assume (1). Then there exists $(m,\theta)$ periodic. We deduce that
$S\theta=\theta$, moreover this direction is admissible.
Then the claim implies that $\overrightarrow{s_v(m)m}=\theta$ and thus is
admissible for $vv_{0}$.\\
Finally assume (2).
First we consider the case where $S\neq Id$.
Lemma \ref{pergeoiso} implies that $m$ is on the axis of $s$ if $|v|$ is even,
otherwise on the plane of reflection.
If $|v|$ is even then $\theta=\overrightarrow{s(m)m}$ is collinear to the
axis of the screw motion. Since we have assumed $\overrightarrow{s_v(m)m}$
admissible we deduce that $\theta$ is admissible with base point $m$.
If $|v|$ is odd then Lemma \ref{pergeoiso} implies that $\theta$ is the direction
of the glide.
The hypothesis implies that $\theta$ is admissible for $v$.\\
Now we prove that $(m,\theta)$ is a periodic trajectory.
We consider the image $T^{|v|}(m,\theta)$. We denote this point
$(p,\theta')$.
We have by hypothesis that $p$ is in $v_0$.
The above claim implies that $\overrightarrow{s_v(m)p}$ is parallel to
the direction $\theta'$. The equation $S\theta=\theta$ gives
$\theta'=\theta$. Thus we have $\overrightarrow{s_v(m)m}$ is parallel to
$\overrightarrow{s_v(m)p}$, since we do not consider direction included in
a face of a polyhedron this implies $p=m$. Thus $(m,\theta)$ is
a periodic point.

If $S=Id$, then $s$ is a translation of vector $\overrightarrow{s_v(m)m}=u$.
The vector $u$ is admissible. Then we consider a point $m$ on the
face $v_0$ which is admissible. Then we show that $(m,u)$ is a periodic
point by the same argument related to the claim.
\end{proof}

Thus we have a new proof of a result of \cite{Ga.Kr.Tr}:
\begin{theorem}
Let $v$ be a periodic word of even length. The set of periodic
points in the face $v_0$ with code $v$ and length $|v|$ can have
two shapes. Either it is an open set or it is a point.

If $v$ is a periodic word of odd length, then the set of periodic points in
the face $v_0$ with code $v$ and period $|v|$ is a segment.
\end{theorem}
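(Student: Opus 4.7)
The plan is to combine Proposition~\ref{percrucial} with Lemma~\ref{pergeoiso}. Write $s = s_v$ for the affine product of reflections and $S = S_v$ for its linear part. By Proposition~\ref{percrucial}, a point $m \in v_0$ with direction $\theta$ is periodic with code $v$ iff $\theta = \overrightarrow{s(m)m}$ is admissible at $m$ and $S\theta = \theta$. So the task reduces to identifying the locus of $m \in v_0$ for which $\overrightarrow{m s(m)}$ lies in the $1$-eigenspace of $S$, and then cutting by the admissibility condition (which is open in the phase space).

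Suppose first $|v|$ is even. Then $\det S = +1$ and $S \in O_3(+)$; by the remark after Proposition~\ref{percrucial}, $s$ is a translation or a screw motion. If $S = Id$, then $s$ is a translation by some vector $u$, and $\overrightarrow{m s(m)} = u$ for every $m \in v_0$; the set of periodic base points is therefore $\{m \in v_0 : u \text{ admissible at } m\}$, which is open in $v_0$. If $S \neq Id$, then $s$ is a screw motion and Lemma~\ref{pergeoiso} forces $m$ to lie on the axis of $s$. Intersecting this line with the closed face $v_0$ yields either the empty set, a point, or a segment; the segment case would place the axis inside the plane of $v_0$, making $\theta$ tangent to the face and thus inadmissible, while nonemptiness follows from the hypothesis that $v$ is periodic. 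Exactly one point survives.

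Now suppose $|v|$ is odd. Then $\det S = -1$ and Remark~\ref{perrme} gives that $S$ is a reflection; since $s$ has no fixed point (unfolding $v$ would otherwise produce a nontrivial affine involution with a fixed point, incompatible with a proper billiard trajectory), $s$ is a glide reflection. Lemma~\ref{pergeoiso} then pins $m$ to the plane of reflection and identifies $\overrightarrow{m s(m)}$ with the fixed glide vector $u$, which lies in that plane. Intersecting the reflection plane with $v_0$ produces a line segment cut out by the boundary of $v_0$; the degenerate case where $v_0$ is contained in the reflection plane is excluded because $u$ would then be tangent to $v_0$, contradicting admissibility. Existence of at least one periodic point ensures the segment is nonempty.

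The main obstacle is precisely the interplay between the purely linear-algebraic loci produced by Lemma~\ref{pergeoiso} (an axis, respectively a plane) and the admissibility constraint. One must verify that these loci meet $v_0$ in the expected codimension, which amounts to observing that any ``parallel'' configuration — axis inside the plane of $v_0$, or reflection plane equal to the plane of $v_0$ — would force $\theta$ into the plane of $v_0$ and so contradict admissibility. Once this is settled, the dimension of the intersection (zero in the screw-motion case, one in the glide-reflection case) gives the announced shape.
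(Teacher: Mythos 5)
Your proposal is correct and follows essentially the same route as the paper: reduce to Proposition~\ref{percrucial}, split on whether $s_v$ is a translation (open set), a screw motion (axis meets the face in a point), or a glide reflection (reflection plane meets the face in a segment). You are in fact somewhat more careful than the paper in ruling out the degenerate parallel configurations via admissibility and in justifying that $s_v$ has no fixed point in the odd case, so the extra detail is welcome rather than a deviation.
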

\begin{proof}
Let $\Pi$ be a face of the polyhedron, and let $m\in \Pi$ be the starting point for a periodic billiard path. The first return map to $m$ is an isometry of $\mathbb{R}^3$ that fixes both $m$ and the direction $u$ of the periodic billiard path. 

Assume first $|v|$ is odd. Then the first return map is a reflection since it fixes a point. 
Then it fixes a plane $\Pi'$. Note that $u\in \Pi'$, and that the intersection $\Pi\cap\Pi'$ is a segment. Points in this segment sufficiently near $m$ have a periodic orbit just as the one starting at $v$.

Assume now $|v|$ is even, we will use Proposition \ref{percrucial}.
If $S_v$ is the identity, then the periodic points are the
points such that the coding of the billiard orbit in the direction of the
translation begins with $v$,
otherwise there is a single point, at the intersection of the axis of
$s$ and $v_0$. However the set of points with code $v$ is still an open set.

\end{proof}
Moreover our proof gives an algorithm to locate this set in the face.
We will use it in Section \ref{pe5}.

\section{Stability}\label{pe4}

\subsection{Notations and definitions} First of all we define the
topology on the set of polyhedra with $k$ vertices. As in the
polygonal case we identify this set with $\mathbb{R}^{3(k-2)}$.
But we remark the following fact\!: Consider a polyhedron $P$ such
that a face of $P$ is not a triangle. Then we can find a
perturbation of $P$, as small as we want, such that the new
polyhedron has a different combinatorial type ({\it i.e} the
number of vertices, number of edges, or number of faces is different).\\
In this case consider a triangulation of each face which does not
add new vertices. Consider the set of all such triangulations of
all faces. There are finitely many such triangulations. Each can
be considered as a combinatorial type of the given polyhedron. Let
$B(P,\varepsilon)$ be the ball of radius $\varepsilon$ in
$\mathbb{R}^{3(k-2)}$ of polyhedra $Q$. If $P$ has a single
combinatorial type, $\varepsilon$ is chosen se small that all $Q$
in the ball have the same combinatorial type. If $P$ has several
combinatorial types, then $\varepsilon$ is taken so small that all
$Q$ have one of those combinatorial type. The definition of
stability is now analogous to the definition in polygons.

In an other way let $v$ be a periodic word in $P$ and $g$ a
piecewise similarity. Consider the polyhedron $g(P)$, and the same
coding as in $P$. If $v$ exists in $g(P)$ it is always a periodic
word in $g(P)$. We note that the notion of periodicity only
depends on the normal vectors to the planes of the faces.

\subsection{Theorem}
\begin{theorem}\label{perstable}
Let $P$ be a polyhedron and $v$ the prefix of a periodic word of
period $|v|$
in $P$.\\
If the period is even, and $S_v$ is different from the identity,
then $v$ is stable.\\
If the period is odd, then the word is stable if and only if $S_v$ is constant
as a function of $P$.
\end{theorem}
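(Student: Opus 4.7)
The plan is to apply Proposition~\ref{percrucial} in both the original polyhedron $P$ and a nearby polyhedron $Q$, using Lemma~\ref{pergeoiso} to pin down the possible starting points. Continuity of the normals gives continuous dependence of $S_v^{(Q)}$ and $s_v^{(Q)}$ on $Q$, and the perturbed face $v_0^{(Q)}$ also varies continuously. Since admissibility is an open condition, it suffices to produce a continuous family of candidate data $(m_Q,\theta_Q)$ satisfying condition~(2) of Proposition~\ref{percrucial}, starting from the given periodic data in $P$; Proposition~\ref{percrucial} will then upgrade these to a genuine periodic orbit coded by $v$ in $Q$.

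For the even case with $S_v\neq\mathrm{Id}$, I note that $\det S_v=+1$ together with $S_v\neq\mathrm{Id}$ makes $S_v$ a nontrivial rotation, and the same holds for $S_v^{(Q)}$ once $Q$ is close enough to $P$. Consequently $s_v^{(Q)}$ is a screw motion (or rotation). Lemma~\ref{pergeoiso} applied to $s_v^{(Q)}$ forces the starting point to lie on the axis and the direction to be parallel to the axis. Both the axis of $s_v^{(Q)}$ and the face $v_0^{(Q)}$ depend continuously on $Q$, so their intersection gives a continuous family $m_Q$ close to $m$, with direction $\theta_Q$ close to $\theta$; openness of admissibility concludes.

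For the odd case, the sufficient direction is analogous: if $S_v^{(Q)}$ equals the fixed reflection $S_v$ for all nearby $Q$, then $s_v^{(Q)}$ is a glide reflection whose linear part is the same reflection with plane $\Pi$, and Lemma~\ref{pergeoiso} locates the starting point on the (continuously varying) reflection plane of $s_v^{(Q)}$ with direction equal to the glide vector $u_Q\in\Pi$; both depend continuously on $Q$, so admissibility is preserved. For the necessary direction, suppose $v$ is stable: then for every nearby $Q$ there is a periodic orbit with direction $\theta_Q$ satisfying $S_v^{(Q)}\theta_Q=\theta_Q$. Since $\det S_v^{(Q)}=-1$, an orientation-reversing linear isometry of $\mathbb{R}^3$ admits a $+1$-eigenvector only if it is a reflection (the alternatives being $-\mathrm{Id}$ and a rotatory reflection, neither of which has eigenvalue $+1$). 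Therefore $S_v^{(Q)}$ must remain a linear reflection for every small perturbation $Q$ of $P$.

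The main obstacle is completing the necessary direction, namely passing from ``$S_v^{(Q)}$ stays a reflection for every nearby $Q$'' to ``$S_v^{(Q)}$ is constant in $Q$''. The reflection locus has codimension one in $O_3(-)$, and $Q\mapsto S_v^{(Q)}$ is a smooth (indeed algebraic) map from the parameter space of polyhedra into $O_3(-)$; one must argue that, in the polyhedral setting, this map cannot land non-trivially inside the codimension-one reflection stratum without being locally constant there. This is where a transversality or algebraic rigidity argument is needed, exploiting the explicit form of $S_v^{(Q)}$ as a product of reflections in the face-normals of $Q$, to conclude that local constancy of the trace of $S_v^{(Q)}$ forces local constancy of $S_v^{(Q)}$ itself.
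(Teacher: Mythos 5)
Your argument follows the paper's own route almost step for step: the even case and the sufficiency half of the odd case are handled exactly as in the paper, via continuity of the face normals, Rodrigue's formula giving continuity of the axis (resp.\ reflection plane) of $S_v^Q$ when it is nontrivial, Lemma~\ref{pergeoiso} to locate the base point on the axis (resp.\ glide plane) of $s_v^Q$, openness of the beam for admissibility, and Proposition~\ref{percrucial} to conclude. Your observation that stability forces $S_v^Q$ to remain a reflection (an orientation-reversing linear isometry of $\mathbb{R}^3$ with a $+1$-eigenvector must be a reflection) is likewise the content of Remark~\ref{perrme}.

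The step you flag as incomplete --- passing from ``$S_v^Q$ is a reflection for every $Q$ near $P$'' to ``$S_v^Q$ is constant in $Q$'' --- is a genuine gap, and you should be aware that the paper does not close it either: the published proof simply asserts that if $S_v$ is not constant then every neighborhood of $P$ contains a $Q$ for which $S_v^Q$ fails to be a reflection, which is precisely the contrapositive of the missing implication, stated without justification. Your concern is well founded: the reflections form a codimension-one stratum of $O_3(-)$ (the condition is $\mathrm{tr}\,S_v^Q=1$), and an algebraic map from the $3(k-2)$-dimensional parameter space of polyhedra into $O_3(-)$ could a priori move non-trivially inside that stratum; for instance, perturbing all the relevant normals by a common rotation conjugates $S_v$ and keeps it a reflection while changing it, so one must check that such perturbations are exactly the rigid motions already quotiented out by the parametrization and that no others survive. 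The transversality or rigidity argument you call for is therefore genuinely needed, and neither your writeup nor the paper supplies it; apart from this point, on which you are in fact more candid than the original, the proposal is correct and coincides with the paper's proof.
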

\begin{remark}
The second point has no equivalence in dimension two, since each
element of $O(2,-)$ is a reflection. It is not the case for
$O(3,-)$.
\end{remark}
\begin{proof}
First consider the case of period even. The matrix $S=S_v$ is not
the identity, and $\theta=\theta_v$ is the eigenvector associated
to the eigenvalue one. First note that by continuity $v$ persists
for sufficiently small perturbations of the polyhedron. Fix a
perturbation and let $B=S_v^Q$ be the resulting rotation for the
new polyhedron $Q$. We will prove that the eigenvalue of $S$ is a
continuous function of $P$. We take the reflections which appears
in $v$ two by two. The product of two of those reflections is a
rotation. We only consider the rotations different of the
identity. The axes of the rotations are continuous map as function
of $P$ since they are at the intersection of two faces. Then
Rodrigue's formula implies that the axes of the rotation, product
of two of those rotations, are  continuous maps of the polyhedron,
under the assumption that the rotation is not the identity
(because $t$ must be of non-zero norm). Since $S^P$ is not equal
to $Id$, there exists  a neighborhood of $P$ where $S^Q\neq Id$.
It implies that the axis of $S^P$ is a continuous function of $P$.
Thus the two eigenvectors of $B,S$ are near if $B$ is sufficiently
close to $S$. The direction $\theta$ was admissible for $v$, we
know that the beam of $v$ is an open set of the phase space
\cite{Ga.Kr.Tr}, so we have for $Q$ sufficiently close to $P$ that
$\alpha$ (the real eigenvector of $B$) is admissible for the same
word. Moreover the foot points are not far from the initial points
because they are on the axis of the isometries.
Thus the perturbated word is periodic by Proposition \ref{percrucial}.\\

If the length of $v$ is odd, then Remark \ref{perrme} implies that
$S$ is a reflection. We have two cases\!: Either $S_v$ is constant, or not.
If it is not a constant function, then in any neighborhood there exists
a polyhedron $Q$ such that $S_v^Q$ is different from a reflection.
Then the periodic trajectory can not exist in $Q$.
If $S_v$ is constant, then it is always a reflection, and a similar argument to
the even case shows that the plane of reflection of $S$ is a continuous map of
$P$. It finishes the proof.
\end{proof}

\begin{corollary}We have three consequences
\begin{itemize}
\item All the words of odd length are stable in a polygon.
\item Consider a periodic billiard path in a right prism. Then its projection inside the polygonal basis is a billiard path. We denote the coding of the projected trajectory as the projected word.
Assume that the projected word is not stable in the polygonal basis. Then the word is unstable.
\item All the words in the cube are unstable.
\end{itemize}
\end{corollary}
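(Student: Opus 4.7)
For the first item I would apply the odd-period case of Theorem~\ref{perstable}. An odd product of linear reflections of $\mathbb{R}^{2}$ lies in $O(2,-)$, and every element of $O(2,-)$ is a reflection (any $2\times 2$ orthogonal matrix of determinant $-1$ fixes a line). Hence $S_v^{Q}$ is automatically a reflection for every nearby polygon $Q$, so its type is constant under perturbation, which is the hypothesis of the odd case of Theorem~\ref{perstable}; stability follows.

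For the second item I would argue the contrapositive. Let $P$ be the right prism over the polygon $\pi$ and assume $v$ is stable in $P$. Given any small perturbation $\pi'$ of $\pi$, build the right prism $P'$ of the same height over $\pi'$; since $P'$ is a small perturbation of $P$, stability forces $v$ to persist as a periodic word in $P'$. In any right prism the billiard splits into two independent motions: a reflection in a side face acts as the corresponding planar reflection on the horizontal component and trivially on the vertical component, while a reflection in the top or bottom face does the opposite. The horizontal projection of the 3D periodic orbit in $P'$ is therefore a closed orbit in $\pi'$ coded by $w$, the word obtained from $v$ by deleting the top and bottom letters; since this holds for every $\pi'$ near $\pi$, $w$ is stable in $\pi$, giving the contrapositive.

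For the third item I would combine item two with a direct analysis of the cube. In the cube, opposite faces are parallel and have the same linear reflection, so for any periodic word $v$ the map $S_v^{P}$ reduces to a diagonal sign matrix whose entry on the $i$-th coordinate axis is $(-1)^{n_i}$, with $n_i$ the number of bounces on faces orthogonal to that axis; for the direction to close up, each $n_i$ with $\theta_i\neq 0$ must be even and the remaining ones are zero, so $S_v^{P}=\mathrm{Id}$ and $|v|$ is even. Now consider a small perturbation $Q$. The map $S_v^{Q}$ is continuous in $Q$, has determinant $+1$, and equals the identity at the cube, so it is a small rotation about some axis $\omega(Q)$. For $v$ to remain periodic in $Q$, the direction of the perturbed orbit must lie in the open beam of $v$ (hence be close to the original $\theta$) and be fixed by $S_v^{Q}$ (hence be collinear with $\omega(Q)$); but $\omega(Q)$ is governed to first order by the derivative of $Q\mapsto S_v^{Q}$ at the cube, and by tilting a single face visited by $v$ about an axis within that face and transverse to $\theta$ one can push $\omega(Q)$ away from $\theta$. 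For such $Q$ no admissible fixed direction exists, $v$ is not periodic in $Q$, and $v$ is unstable.

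The main obstacle will be the final step of the third item: rigorously showing that the first-order variation of $Q\mapsto S_v^{Q}$ at the cube is nondegenerate enough to move $\omega(Q)$ transversely to $\theta$ by an arbitrarily small perturbation. The intuition is that tilting one face replaces a reflection by a nearby one whose commutator with the originally equal reflection on the parallel face produces an infinitesimal rotation whose axis depends continuously on the direction of the tilt; a short explicit computation using Rodrigue's formula should confirm that this axis covers a neighborhood of all directions as the tilt varies, completing the argument.
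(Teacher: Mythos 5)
Your first two items are correct and essentially the paper's own argument: item one is exactly the observation that $O(2,-)$ consists only of reflections so the odd case of Theorem~\ref{perstable} applies automatically, and item two is the paper's argument phrased as a contrapositive (the paper perturbs a vertical face of the prism directly and invokes instability of the projected word $w$; you perturb the base, rebuild the prism, and invoke stability of $v$ --- same content, and your restriction to prism-shaped perturbations is legitimate since one bad perturbation suffices for instability, respectively all perturbations of the base are realized by perturbations of the prism for the contrapositive).

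The third item is where you genuinely diverge from the paper, and where the gap lies. The paper simply views the cube as a right prism over the square in each coordinate direction, applies item two, and uses the fact (an easy computation, via $S_w=\mathrm{Id}$ for every periodic word in the square together with the polygonal instability criterion) that no word is stable in the square; the only case needing separate treatment is a word using just one pair of opposite faces, which is the trivially unstable period-two bounce. Your direct approach has two problems. First, the key nondegeneracy claim --- that tilting one face produces a first-order variation of $S_v^Q$ whose rotation axis $\omega(Q)$ can be steered freely --- is exactly the hard part, and you leave it as an unverified ``should confirm.'' Second, and more seriously, even granting it, your conclusion does not follow as stated: you argue that the fixed direction of $S_v^Q$ must be ``close to the original $\theta$'' because it lies in the open beam of $v$, but the beam is an open set of \emph{fixed} size, not a shrinking neighborhood of $\theta$, so moving $\omega(Q)$ transversally to $\theta$ by a small or even order-one amount does not by itself expel it from the admissible cone; you would additionally have to rule out, via Proposition~\ref{percrucial}, that the axis of the perturbed screw motion meets the face $v_0$ at an admissible base point. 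The paper's reduction to the square avoids all of this.
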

The first point was already mentioned in \cite{Ga.St.Vo}.

\begin{proof}
For the first point the proof is the same as the proof of the theorem.
Indeed is $|v|$ is odd then $s$ has a real eigenvector, and we can apply the
proof.\\
For the second point we begin with the period two trajectory which hits the top and the bottom of the prism. It is clearly unstable, for example we can change one face and keep the other. Let $v$ be any other periodic word, and $w$ the word
corresponding to the projection of $v$ to the base of the prism assumed to be
unstable. We perturb a vertical face of the prism
such that this face contains an edge which appears in the coding of $w$.
The word $v$ can not be periodic in this polyhedron by unstability of $w$.\\
For the cube, let $v$ be a periodic word, by preceding point its projection on each coordinate plane must be stable. But an easy computation shows that no word is stable in the square.
\end{proof}

We remark that the two and three dimensional cases are different for the
periodic trajectories of odd length. They are all stable in one case, and all
unstable in the second.
Recently Vorobets has shown that if $S_v=Id$ then the word is not stable
\cite{Vo.04}.

\section{Tetrahedron}\label{pe5}
In the two following Sections we prove the following result.
\begin{theorem}\label{tetra}
Assume the billiard map inside the tetrahedron is coded by
$a,b,c,d$.\\

 $\bullet$ The word $abcd$ is periodic for all the
tetrahedra in a neighborhood of the regular one.\\

$\bullet$ In any right tetrahedron Fagnano's orbit does not exist.
There exists an open set of obtuse tetrahedron where Fagnano's
orbit does not exist.
\end{theorem}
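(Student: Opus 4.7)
The plan for the first bullet is to exhibit Fagnano's orbit in the regular tetrahedron $T_0$ explicitly and then invoke the stability criterion of Theorem \ref{perstable}. A candidate orbit already exists by Stenman's result, but for the purposes of the stability argument I would locate it via the recipe extracted from Proposition \ref{percrucial}: pick the face $a$ of $T_0$, compute the isometry $s_v=s_a s_d s_c s_b$, and find a base point $m\in a$ whose image $s_v(m)$ is aligned with an eigenvector of $S_v$ with eigenvalue one. By the symmetry of $T_0$, the natural candidate is the center of face $a$ (or a carefully chosen point on the segment joining the face center to a vertex), and unfolding the four reflections gives a straight-line check that the orbit closes. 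Since $|v|=4$ is even, Theorem \ref{perstable} reduces stability to checking $S_v\neq\mathrm{Id}$, so the whole first bullet comes down to verifying this single non-triviality condition for $T_0$.

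To compute $S_v=S_a S_d S_c S_b$ I would group the reflections as $(S_a S_d)(S_c S_b)$ and apply Rodrigue's formula twice: each pair is a rotation whose axis is the common edge of the two faces and whose angle is twice the dihedral angle there. In the regular tetrahedron these dihedral angles are all $\arccos(1/3)$, and the two axes are skew, so Rodrigue's formula yields a rotation whose tangent vector $t''$ is manifestly nonzero. This proves $S_v\neq\mathrm{Id}$ at $T_0$, hence $v=abcd$ is stable, and the first bullet follows.

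For the right tetrahedron statement, the plan is to use Proposition \ref{percrucial} as an obstruction: a periodic orbit coded by $abcd$ requires a base point $m$ in face $a$ such that $\overrightarrow{s_v(m)m}$ is both admissible for $vv_0$ and in the eigenspace of $S_v$. By Lemma \ref{pergeoiso}, the set of $m$ satisfying the eigenspace condition is either the axis of a screw motion (if $\det S_v>0$) or the plane of a glide reflection (if $\det S_v<0$). I would compute $S_v$ in a right tetrahedron (where one vertex has three mutually perpendicular edges) using the same Rodrigue chain as above; the orthogonality relations collapse the formula enough that one can locate the axis/plane explicitly and check that it misses the interior of face $a$. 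The admissibility condition then fails at every candidate base point, ruling out the orbit.

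The open set of obtuse tetrahedra is obtained by a continuity/openness argument: the computations above exhibit a quantitative separation between the axis of $s_v$ and the interior of face $a$ at a well-chosen right tetrahedron; this separation is given by continuous functions of the vertex coordinates (the axis depends continuously on the face normals by Rodrigue's formula, as already used in the proof of Theorem \ref{perstable}), so it persists under small perturbation. Choosing the perturbation within the obtuse locus yields the desired open set. The main obstacle I anticipate is algebraic: carrying Rodrigue's formula through four reflections produces expressions whose geometric content (the position of the axis relative to face $a$) is not transparent, and the delicate task is to extract from them an open and checkable condition on the dihedral angles that both holds at the regular tetrahedron (giving stability) and fails on an open set of obtuse tetrahedra.
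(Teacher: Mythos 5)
Your plan for the first bullet is essentially the paper's: realize the orbit explicitly in the regular tetrahedron via the screw-motion recipe of Proposition \ref{percrucial}, check $S_v\neq\mathrm{Id}$, and invoke Theorem \ref{perstable}. (The paper works with explicit reflection matrices rather than Rodrigue's formula and finds the base point at $\frac{\sqrt2}{20}(2,2,1)^t$, on the altitude of the face rather than at its center, but your hedged description covers this; your grouping into two rotations about opposite edges is the same device the paper uses when it writes $S_v=R_{DC}R_{AB}$.) The second bullet, however, has two genuine gaps.

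First, your proposed obstruction for the right tetrahedron --- that the axis of $s_v$ ``misses the interior of face $a$'' --- is false. Put the right-angle vertex at $A=(0,0,0)$ with $B,C,D$ on the coordinate axes. Then $S_b,S_c,S_d$ are the three coordinate reflections, so $S_v=-S_a$ is the rotation by $\pi$ about the normal of face $a=BCD$; since $s_v(A)=s_a(A)$, the axis of the screw motion is the perpendicular from $A$ to the plane $BCD$, and it meets face $a$ at the orthogonal projection of $A$, whose barycentric coordinates with respect to $B,C,D$ are all strictly positive. So the axis hits the \emph{interior} of face $a$ in every right tetrahedron. The true obstruction, which the paper uses, is that the would-be periodic direction is the normal to face $a$, so the first segment of the trajectory runs from the foot of the altitude straight into the vertex $A$, where the billiard map is undefined. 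This is an admissibility failure, but not the one you describe, and the computation you outline would not detect it.

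Second, precisely because the right-tetrahedron obstruction is ``the trajectory passes through a vertex,'' it is not an open condition: there is no quantitative separation at a right tetrahedron to propagate, since the distance you want bounded away from zero is exactly zero there, and after a small perturbation the trajectory merely grazes the vertex and its subsequent coding is uncontrolled. Your continuity argument for the open set of obtuse counterexamples therefore starts from the wrong base point. The paper instead exhibits a separate explicit obtuse tetrahedron ($A=(0,0,0)$, $B=(2,0,0)$, $C=(1,1,0)$, $D=(3,2,1)$) for which the axis of $s_v$ meets the plane of face $a$ at a point strictly outside the triangle $BCD$; that separation is positive, depends continuously on the vertices, and hence persists on an open neighborhood. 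You need such an example, or some other robustly open obstruction; perturbing off a right tetrahedron will not produce one.
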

\begin{remark}
Steinhaus in his book \cite{Stein}, cites Conway for a proof that
$abcd$ is periodic in all tetrahedra, but our theorem gives a
counter example. Moreover our proof gives an {\bf algorithm} which
find the coordinates of the periodic point, when it exists.
\end{remark}
For the definition of obtuse tetrahedron, see Section 6.
\subsection{Regular tetrahedron}
We consider a regular tetrahedron. We can construct a periodic
trajectory of length four, which is the generalization of
Fagnano's orbit. To do this we introduce the appropriate coding
(see Figure \ref{perfig4}).
\begin{figure}[hbt]
\begin{center}
\includegraphics[width= 4cm]{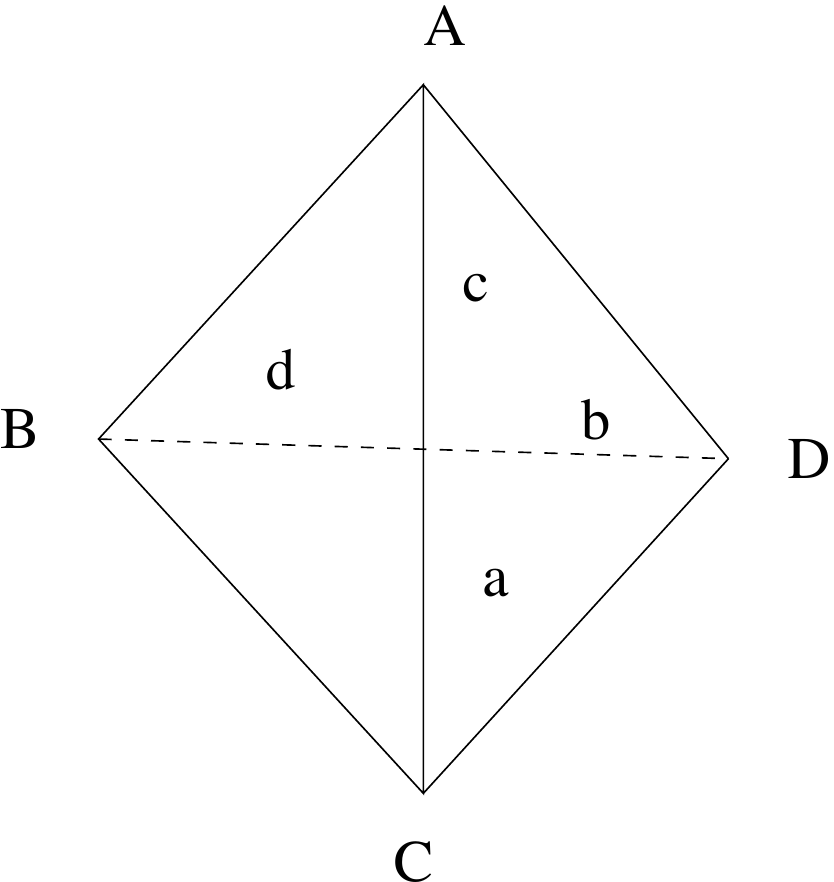}
\caption{Coding of a tetrahedron}
\label{perfig4}
\end{center}
\end{figure}
In this figure, the letter $a$ is opposite to the vertex $A$, etc. 
\begin{lemma}
Let $ABCD$ be a regular tetrahedron, with the natural coding. If
$v$ is the word $adcb$, there exists a direction $\theta$, there
exists an unique point $m$ such that $(m,\theta)$ is periodic and
has $v$ as prefix of its coding. Moreover $m$ is on the altitude
of the triangle $BCD$ which starts at $C$.
\end{lemma}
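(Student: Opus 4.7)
The plan is to combine Proposition \ref{percrucial} with two symmetries specific to the regular tetrahedron. Since $|v|=4$ is even, $S_v$ lies in $O_3(+)$, and the theorem at the end of Section \ref{pe3} implies that the set of periodic starting points on face $a$ with code $v$ is either an open subset (when $S_v=\mathrm{Id}$) or a single point lying on the axis of the screw motion $s_v$ (when $S_v\neq\mathrm{Id}$). The task therefore splits into three pieces: prove $S_v\neq\mathrm{Id}$ to secure uniqueness, locate the candidate $m$, and verify admissibility.

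For the non-triviality of $S_v=S_aS_bS_cS_d$, I would apply Rodrigue's formula to the pairs $(S_a,S_b)$ and $(S_c,S_d)$, using that every dihedral angle of the regular tetrahedron equals $\arccos(1/3)$. The two intermediate rotations have axes along the shared edges of these face pairs; a further application of Rodrigue's formula should produce a rotation whose axis is transverse to face $a$, so that axis meets face $a$ in at most one point, giving the uniqueness of~$m$.

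To pin $m$ down on the altitude from $C$, I would exploit two compatible symmetries of the problem. The regular tetrahedron admits a reflection $R$ through the plane containing the edge $AC$ and the midpoint of $BD$; this $R$ fixes the faces $a$ and $c$ and swaps $b$ and $d$, hence carries any periodic orbit with code $adcb$ starting at $p$ to a periodic orbit with code $abcd$ starting at $R(p)$. Separately, reversing the direction at the starting point of a periodic orbit with code $adcb$ yields a periodic orbit with code $abcd$ starting at the \emph{same} point. Applying the uniqueness statement of the previous paragraph to both codes $adcb$ and $abcd$ (noting that $R$ conjugates $S_{adcb}$ to $S_{abcd}$, so both are simultaneously non-identity), these two observations force $R(m)=m$; therefore $m$ lies in the intersection of the $R$-fixed plane with face $BCD$, which is exactly the altitude from $C$.

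The main obstacle is existence, i.e.\ verifying the admissibility clause of Proposition \ref{percrucial}(2). My approach would be to parametrize the altitude from $C$ as $m(t)$, locate the value $t^\ast$ at which $m(t^\ast)$ lies on the axis of $s_v$ (a single linear equation in $t$), and then unfold the tetrahedron along the face sequence $a,d,c,b$ back to face $a$. Admissibility reduces to checking that the straight segment from $m(t^\ast)$ to its image in the unfolded copy of face $a$ crosses no edges and stays inside the unfolded union of faces. The symmetries of the regular tetrahedron constrain this unfolded configuration so rigidly that the check should reduce to a single elementary positivity inequality, which one also expects to hold by continuity with the triangular Fagnano orbit.
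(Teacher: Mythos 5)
Your outline diverges from the paper in an attractive way on one sub-claim but leaves the core of the lemma unproved. The paper's proof is a direct coordinate computation: it writes down the four reflection matrices in Stenman's coordinates, computes $S_v$ explicitly, extracts the eigenvector $u=\frac{1}{\sqrt{5}}(0,2,1)^t$ and the translation part $N$, solves for the intersection of the screw axis with face $a$ to get $m=\frac{\sqrt{2}}{20}(2,2,1)^t$, and checks $(\overrightarrow{Cm}\mid\overrightarrow{DB})=0$. Your symmetry argument for the altitude claim --- combining the reflection through the plane of $AC$ and the midpoint of $BD$ with time reversal to force $R(m)=m$ --- is correct and genuinely different; it explains \emph{why} $m$ lies on that altitude without coordinates, and the conjugation remark handles the needed uniqueness for the code $abcd$ as well. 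That part is a legitimate improvement in transparency over the paper's verification.

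The genuine gap is existence. Proposition \ref{percrucial} requires the admissibility clause: the candidate $(m,\theta)$ with $m$ on the axis and $\theta$ the eigendirection must actually generate a billiard trajectory whose coding begins with $adcba$, i.e.\ the segments must cross the faces in the prescribed order without meeting edges. You describe an unfolding procedure but never carry it out, and your two closing justifications do not stand up: ``the symmetries constrain this so rigidly that the check should reduce to a single positivity inequality'' is a hope, not an argument, and ``continuity with the triangular Fagnano orbit'' is not available --- there is no deformation from the regular tetrahedron to a planar triangle along which this length-four orbit persists, so nothing is being continued from a known case. Similarly, the non-identity of $S_v$ (hence uniqueness) is asserted via an uncomputed double application of Rodrigue's formula; it is true, but you have not established it. Without the admissibility check and the explicit non-identity verification, you have located where a periodic point must be if it exists, but not shown that the direction $\theta$ and point $m$ of the statement exist. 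The paper's explicit coordinates are what make both checks concrete (and are reused later in Sections 6 and 7), so some computation of this kind cannot be avoided here.
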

\begin{remark}
If we consider the word $v^n$, the preceding point $m$ is the unique
periodic point for $v^n$. Indeed the map $s_{v^n}$ has the same axis as $s_v$,
and we use Proposition \ref{percrucial}.

We use the following coordinates for two reasons. First these coordinates were used by  Ruijgrok and Rabouw \cite{Ra.Rui}. Secondly with these coordinates the matrix $S_v$ has rational entries, and the computations seems more simples.
\end{remark}
\begin{proof}
The lemma has already been proved in \cite{Sten}, but we rewrite it
in a different form with the help of Proposition \ref{percrucial}.

We have $S_v=S_a.S_b.S_c.S_d=R_{DC}.R_{AB}$ where $R_{DC}$ is the linear
rotation of axis $DC$, it is a product of the two reflections.
We compute the real eigenvector of $S_v$, and we obtain the point $m$ at
the intersection of the axis of $s$ and the face $BCD$.
We consider an orthonormal base of $\mathbb{R}^3$ such that the points
have the following coordinates, see \cite{Sten}\!:
$$A=\frac{\sqrt{2}}{4}\begin{pmatrix}-1\\-1\\-1\end{pmatrix},\quad
D=\frac{\sqrt{2}}{4}\begin{pmatrix}1\\-1\\1\end{pmatrix},$$
$$\quad C=\frac{\sqrt{2}}{4}\begin{pmatrix}1\\1\\-1\end{pmatrix},\quad
B=\frac{\sqrt{2}}{4}\begin{pmatrix}-1\\1\\1\end{pmatrix}.$$
The matrices of $S_a,S_d,S_c,S_b$ are\!:
$$\frac{1}{3}\begin{pmatrix}
1 & -2 & -2\\
-2 & 1 & -2\\
-2 & -2 & 1\\
\end{pmatrix},
\frac{1}{3}\begin{pmatrix}
1 & 2 & 2\\
2 & 1 & -2\\
2 & -2 & 1\\
\end{pmatrix},
\frac{1}{3}\begin{pmatrix}
1 & -2 & 2\\
-2 & 1 & 2\\
2 & 2 & 1\\
\end{pmatrix},
\frac{1}{3}\begin{pmatrix}
1 & 2 & -2\\
2 & 1 & 2\\
-2 & 2 & 1\\
\end{pmatrix}.
$$
Now we obtain $S$.
$$S=S_aS_bS_cS_d,$$
$$S=\frac{1}{81}\begin{pmatrix}
-79 & -8 & 16\\
8 & 49 & 64\\
-16 & 64 & -47\\
\end{pmatrix}$$
The real eigenvector is
$u=\frac{1}{\sqrt{5}}\begin{pmatrix}0\\2\\1\end{pmatrix}$.
Now we compute the vector $N$ such that $s(X)=SX+N$.
To do this we use the relation $s(A)=s_a(A)$.
$s_a$ is the product of $S_a$ and a translation of vector $v$.
We obtain
$$v=\frac{\sqrt{2}}{6}\begin{pmatrix}1\\1\\1\end{pmatrix},$$
$$s(A)=\frac{5\sqrt{2}}{12}\begin{pmatrix}1\\ 1\\ 1\end{pmatrix},$$
 $$N=\frac{\sqrt{2}}{81}\begin{pmatrix}
16\\64\\34\\ \end{pmatrix}.$$

We see that $s$ is a screw motion.
Finally we find the point at the intersection of the axis and the face $a$.
The points of the axis verify the equation
\begin{equation*}
SX+N=X+\lambda u.
\end{equation*}
where $X$ are the coordinates of the point of the axis, and $\lambda$ is
a real number.

The point $m$ is on the face $a$ if we have
\begin{equation*}
(\overrightarrow{Cm}|\overrightarrow{CB}\wedge\overrightarrow{CD})=0.
\end{equation*}
So $X$ is the root of the system made by those two equations.
The last equation gives $x+y+z=\frac{\sqrt{2}}{4}$.
We obtain
$$m=\frac{\sqrt{2}}{20}\begin{pmatrix}2 \\ 2 \\ 1\end{pmatrix}.$$
We remark that $(\overrightarrow{Cm}|\overrightarrow{DB})=0$ which proves
that $m$ is on the altitude of the triangle $BCD$.
\end{proof}
In fact there are six periodic trajectories of length four, one for each of the
word
$$abcd,abdc,acbd,acdb,adbc,adcb.$$
The six orbits come in pairs which are related by the natural
involution of direction reversal. Now we can ask the same question
in a non regular tetrahedron. Applying Theorem \ref{perstable}
yield the first part of Theorem \ref{tetra}.

Now the natural question is to characterize the tetrahedron which contains
this periodic word.

\section{Stability for the tetrahedron}\label{pe7}
\begin{definition}
A tetrahedron is acute if and only if
in each face the orthogonal projection of the other vertex is
inside the triangle.

A tetrahedron is right if and only if there exists a vertex, where the three
triangles are right triangles.
\end{definition}
We recall that an acute triangle is a triangle where all the angles are
less than $\frac{\pi}{2}$.
For the polygons our definition is equivalent to the natural definition.

\subsection{Proof of second part of Theorem \ref{tetra}}
We consider a tetrahedron $ABCD$ with vertices
$$A=(0,0,0)\quad B=(a,0,0)\quad C=(0,b,0)\quad D=(0,0,1).$$
We study the word $v=abcd$.

We have $S=S_a*S_d*S_c*S_b$.

 Since
$$S_b\begin{pmatrix}-1 & 0 & 0 \\ 0 & 1 & 0 \\ 0 & 0 & 1\end{pmatrix},$$
$$S_c=\begin{pmatrix} 1 & 0 & 0 \\ 0 & -1 & 0 \\ 0 & 0 & 1\end{pmatrix},$$
$$S_d=\begin{pmatrix} 1 & 0 & 0 \\ 0 & 1 & 0 \\ 0 & 0 & -1\end{pmatrix},$$

we obtain $S=-S_a$, thus $S$ has 1 for eigenvalue, and the
associated eigenvector is the normal vector
to the plane $a$. We remark that $s(A)=s_a(A)$.
The fact that $S=-S_a$ implies that $S$ is a rotation of angle $\pi$, thus
$s$ is the product of a rotation of angle $\pi$ and a translation.

Consider the plane which contains $A$ and orthogonal to the axis of $S$,
let $O$ the point of intersection. Then $S$ is a rotation of angle $\pi$, thus
 $O$ is the middle of $[AE]$, where $E$ is given by
$S(\overrightarrow{OE})=\overrightarrow{OA}$.
It implies that the middle $M$ of the edge $[As(A)]$ is on the axis of $s$, see
Figure \ref{pef6}.
\begin{figure}[h]
\begin{center}
\includegraphics[width= 4cm]{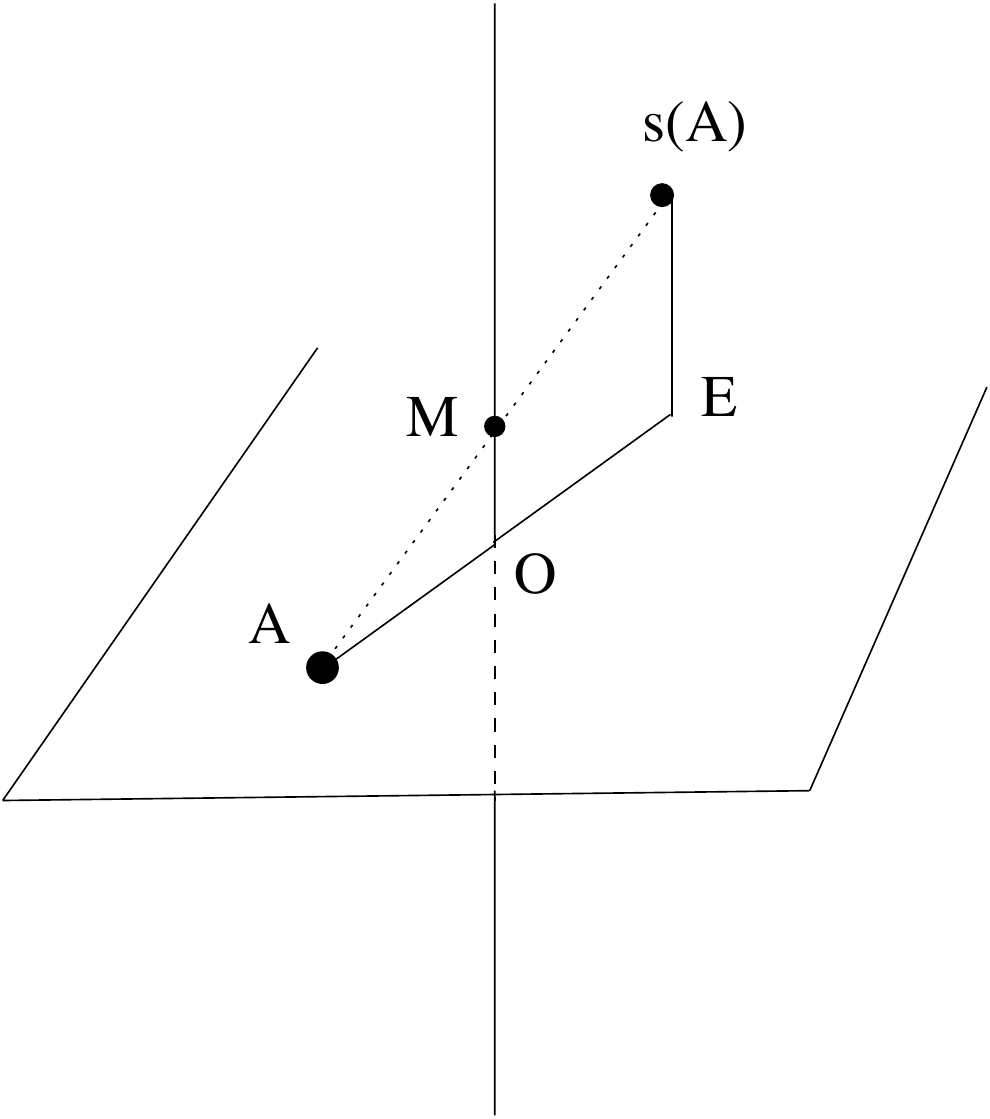}
\caption{Screw motion associated to the word abcd}
\label{pef6}
\end{center}
\end{figure}
Clearly $m$ is a point in the side $ABC$. If $v$ is periodic then applying
Proposition \ref{percrucial} yields that $M$ is the base point of the periodic
trajectory.
Moreover since the direction of the periodic trajectory is the normal vector
to the plane $a$, we deduce that $A$ is on the trajectory.
So the periodic trajectory cannot exist.

Now we prove the second part of the theorem.
We give an example of obtuse tetrahedron where Fagnano's orbit does not exist.

In this example the point on the initial face, which must be
periodic see Proposition \ref{percrucial}, is not in the interior of the triangle.

We consider the tetrahedron $ABCD$
$$A(0,0,0)\quad B(2,0,0)\quad C(1,1,0)\quad D(3,2,1).$$

We study the word $v=abcd$.

We obtain the matrix of $S_v$
$$\begin{pmatrix}
1/33 & 8/33 & 32/33 \\
104/165 & -25/33 & 28/165 \\
128/165 & 20/33 & -29/165
\end{pmatrix}.$$
Now $s$ is the map $SX+N$ where
$N=\begin{pmatrix} 4/11 \\ 4/11 \\- 12/11\end{pmatrix}$.

$S$ has the following eigenvalue
$u=\begin{pmatrix} 9/8 \\ 1/2 \\ 1\end{pmatrix}$.

Now $s$ is a screw motion and we find the point at the intersection
of the axis of $s$ and the face $a$ \!:
We must solve the system
\begin{gather}
Sm+N=m+\lambda.u \\
(\overrightarrow{Bm}.n)=0
\end{gather}
It is equivalent to the system
$$\begin{pmatrix} S-Id & -u\\n^t & 0\end{pmatrix}
\begin{pmatrix}m\\ \lambda\end{pmatrix}=
\begin{pmatrix}-N\\2\end{pmatrix}.$$
where $n$ is the normal vector to the face $BCD$.
$$n=\begin{pmatrix}1 \\ 1 \\ -3\end{pmatrix}.$$
We obtain the matrix
$$\begin{pmatrix}
-32/33 & 8/33 & 32/33 & -9/8 \\
104/165 & -58/33 & 28/165 & -1/2 \\
128/165 & 20/33 & -194/165 & -1 \\
1 & 1 & -3 & 0
\end{pmatrix}.$$
We obtain $m$\!:
$$m=\begin{pmatrix}
22/161 \\ 6/23 \\ -86/161
\end{pmatrix}.$$
But this point is not inside $BCD$.
Moreover we see that this point is not on
the altitude at $BD$ which passes through $C$.

The tetrahedron is obtuse, due to the triangle $ABD$.
The triangle $BCD$ is acute, and the axis of $s$ does not cut this face
in the interior of the triangle.

Moreover we obtain that there exists a neighborhood of this tetrahedron, where
Fagnano's word is not periodic.
Indeed in a neighborhood the point $m$ can not be in the interior of $ABCD$.
\begin{remark}
We can remark that our proof gives a criterion for the existence of a periodic billiard path of this type. One computes the axis of the screw motion, and find if it intersects the relevant faces. 

For a generic tetrahedron we can use it to know if there exists a Fagnano's orbit. But we have not find a good system of coordinates where the computations are easy. Thus we are not able to caracterize the tetrahedra with a Fagnano's orbit.
\end{remark}
\section{First return map}
 In this section we use the preceding example to study a related problem for periodic billiard paths. We answer to a question of
Galperin, Kruger, Troubetzkoy \cite{Ga.Kr.Tr} by an example of
periodic word $v$ with non periodic points inside its beam.

We consider the word $v=(abcd)^\infty$ and the set $\sigma_v$. The projection of this set on the face
$a$ is an open set. Each point in this open set return to
the face $a$ after three reflections. We study this return map and the set $\pi_{a}(\sigma_v)$. We consider
the same basis as in Section 6.
Moreover, in the face $a$ we consider the following basis
$$\begin{pmatrix}\frac{\sqrt{2}}{4}\\0\\0\end{pmatrix}+\mathbb{R}\begin{pmatrix}1\\0\\-1\end{pmatrix}+\mathbb{R}
\begin{pmatrix}1\\-2\\1\end{pmatrix}.$$
\begin{theorem}
In the regular tetrahedron, consider the word $v=(abcd)^\infty$.
Then the set $\pi_{a}(\sigma_v)$ is an open set. There exists only
one point in this set with a periodic billiard orbit.
\end{theorem}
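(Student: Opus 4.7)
The plan is to separate the two claims, using the framework of Proposition \ref{percrucial} and the explicit data of Section 5.

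For uniqueness, suppose $(m,\theta)\in\sigma_v$ is periodic; its coding is then $(abcd)^n$ for some $n\ge 1$. By Proposition \ref{percrucial} applied to the finite word $(abcd)^n$, the base point lies on the axis of the screw motion $s_{(abcd)^n}=(s_v)^n$. Since $s_v$ is a non-trivial screw motion and its powers share the same axis of rotation, this axis coincides with that of $s_v$ itself, which meets face $a$ at the single point $m^*=\frac{\sqrt{2}}{20}(2,2,1)$ from the Fagnano lemma (as already recorded in the remark following that lemma). Hence $m^*$ is the only periodic point in $\pi_a(\sigma_v)$.

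For openness, let $\theta^*=\frac{1}{\sqrt{5}}(0,2,1)$ denote the Fagnano direction, the unit eigenvector of $S_v$ of eigenvalue one. I would fix $\theta^*$ and analyze the first return map $F$ to face $a$ along trajectories of direction $\theta^*$: because $S_v\theta^*=\theta^*$, $F$ is a well-defined affine self-map of face $a$ fixing $m^*$, and by the construction in the proof of Proposition \ref{percrucial}, $F(m)$ is the intersection with face $a$ of the line $s_v(m)+\mathbb{R}\theta^*$. The key step is to observe that $F$ is conjugate to a planar rotation. Let $\Pi$ be a plane perpendicular to $\theta^*$ and $\pi_{\theta^*}:\mathbb{R}^3\to\Pi$ the orthogonal projection. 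Since $s_v$ is a screw motion with axis $L$ parallel to $\theta^*$, one has $\pi_{\theta^*}\circ s_v=R\circ \pi_{\theta^*}$, where $R$ is the rotation of $\Pi$ about $L\cap\Pi$ by the rotation angle of $s_v$; and $\pi_{\theta^*}(F(m))=\pi_{\theta^*}(s_v(m))$ since $\theta^*$ lies in the kernel of $\pi_{\theta^*}$. A direct check gives $\theta^*\cdot(1,1,1)=3/\sqrt{5}\neq 0$, so face $a$ is transverse to $\theta^*$ and $\pi_{\theta^*}|_{\text{face }a}$ is an affine isomorphism onto $\Pi$. Therefore $F$ is conjugate to $R$, and all its orbits are bounded, lying on invariant ellipses centered at $m^*$.

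Choose an $F$-invariant open neighborhood $W\ni m^*$ in face $a$ with $W\times\{\theta^*\}\subset\sigma_{abcd}$; such a $W$ exists by pulling back a sufficiently small $R$-invariant disk around $\pi_{\theta^*}(m^*)$, using that $\sigma_{abcd}$ is open and contains $(m^*,\theta^*)$. For any $m\in W$, every iterate $F^n(m)$ stays in $W$, so the trajectory of $(m,\theta^*)$ has coding $(abcd)^\infty$, giving $(m,\theta^*)\in\sigma_v$ and $m\in\pi_a(\sigma_v)$. Thus $W\subset\pi_a(\sigma_v)$ is an open neighborhood of $m^*$; the same rotational-conjugacy argument applied at any other point of $\pi_a(\sigma_v)$, whose $F$-orbit is likewise bounded, yields openness at that point.

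The main obstacle is the identification of $F$ with a planar rotation via $\pi_{\theta^*}$; once this conjugacy is in hand, boundedness of orbits and openness follow immediately from elementary properties of rotations. A subsidiary difficulty is justifying openness at points of $\pi_a(\sigma_v)$ whose admissible directions are not exactly $\theta^*$, but this is handled by the same conjugacy structure since any admissible direction lies in a bounded $S_v$-orbit on the sphere of directions.
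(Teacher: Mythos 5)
Your proposal is correct in substance and reaches the same structural picture as the paper, but by a noticeably different route, so a comparison is worthwhile. The paper proceeds by explicit computation: it writes the first return map $r$ to face $a$ in planar coordinates as an affine map $V\mapsto AV+B$ with $A=\tfrac{1}{81}\bigl(\begin{smallmatrix}-83&28\\-12&-75\end{smallmatrix}\bigr)$, from which $\det A=1$ and $|\operatorname{tr}A|<2$, so orbits lie on invariant ellipses centred at the fixed point; it identifies $\pi_a(\sigma_v)$ with the largest invariant ellipse contained in the admissible region and settles uniqueness by checking that $r$ has a single fixed point. You obtain the same elliptic structure without computing $A$: projecting along $\theta^*$ conjugates the return map to the rotation part of the screw motion $s_v$, which explains conceptually where the invariant ellipses come from and is consistent with the paper's numbers (the rotation angle satisfies $\cos\alpha=-79/81$, read off equally from $\operatorname{tr}A$ or $\operatorname{tr}S_v$). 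Your uniqueness argument via Proposition \ref{percrucial} and the common axis of the powers $s_v^n$ is in fact more complete than the paper's, since it excludes periodic orbits of every period $4n$ at once, whereas ``only one point is fixed by $r$'' literally treats only $n=1$.

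Two points need tightening. First, ``the powers of $s_v$ share the same axis'' presupposes that no power of the rotation part is the identity; if $S_v^n=Id$ for some $n$, then $s_v^n$ is a translation and Proposition \ref{percrucial} would instead yield an open set of periodic points of period $4n$. This cannot happen here because $\cos\alpha=-79/81$ is rational but not in $\{0,\pm\tfrac12,\pm1\}$, so by Niven's theorem $\alpha$ is an irrational multiple of $\pi$; you should say so explicitly. Second, your closing remark about admissible directions other than $\theta^*$ does not establish anything: every $S_v$-orbit on the sphere of directions is bounded, so boundedness is vacuous. The correct reason that only $\theta^*$ can serve the infinite word is that in the unfolding the copies $s_v^{n}(\text{face }a)$ remain within bounded distance of the screw axis, while a straight line in any direction $\theta\neq\pm\theta^*$ drifts away from that axis linearly in $n$; the paper makes the same tacit assumption when it writes the return map as a function of position alone. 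With these two additions your argument is complete at the same level of rigour as the paper's, and your treatment of openness near the fixed point (invariant neighbourhoods pulled back from $R$-invariant disks) is a faithful, more geometric rendering of the paper's ``biggest ellipse'' claim.
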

The theorem of \cite{Ga.Kr.Tr} explains that some such cases could
appear, but there were no example before this result.

 Theorem 11 means that for all point
in $\pi_{a}(\sigma_v)$, except one, the billiard orbit is coded by
a periodic word, but it is never a periodic trajectory. For the
proof we begin by the following lemma.
 \begin{lemma}
In the regular tetrahedron, consider the word $v=(abcd)^\infty$. The first return map $r$ on
$\pi_{a}(\sigma_v)$ has the following equation
$$r: V=\displaystyle\begin{pmatrix}x\\y\end{pmatrix}\mapsto AV+B,$$
$$A=\frac{1}{81}\displaystyle\begin{pmatrix}-83&28\\-12&-75\end{pmatrix}, B+\frac{1}{81}\displaystyle
\begin{pmatrix}-15\\9\end{pmatrix}.$$

The set $\pi_{a}(\sigma_v)$ is the interior of the biggest ellipse of center $m$ related to the matrix $A$.
\end{lemma}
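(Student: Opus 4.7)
The plan is to exploit the identity from the proof of Proposition~\ref{percrucial}: for any admissible starting point $m'$ on face $a$, the orbit returns to face $a$ so that $\overrightarrow{s_v(m')\, r(m')}$ is parallel to the periodic direction $u$ (the real eigenvector of $S_v$). Hence
$$r(m') = s_v(m') + \lambda(m')\, u,$$
where $\lambda(m')$ is uniquely determined by the requirement that $r(m')$ lie in the plane of face $a$. Both $s_v$ and $\lambda$ are affine in $m'$, so $r$ is an affine map; its unique fixed point must be the periodic point $m$ found in the previous lemma, which determines the translation part once the linear part $A$ is known.

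To compute $A$ and $B$ explicitly, I would substitute the data already available (the matrix $S_v$, the translation $N$, the axis direction $u$, and the fixed point $m$) into the displayed formula, re-express the result in the two-dimensional basis with origin $(\sqrt{2}/4,0,0)$ and generators $(1,0,-1)$, $(1,-2,1)$, and solve for $\lambda$ using the face equation $x+y+z=\sqrt{2}/4$. This is a routine linear algebra computation that produces the stated $A$ and $B$.

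For the second assertion, the stated matrix satisfies $\det A = 1$ and $|\operatorname{tr} A| = 158/81 < 2$, so its eigenvalues are complex conjugate of modulus one, and $A$ is conjugate over $\mathbb{R}$ to a rotation of angle $\alpha$ with $\cos\alpha = -79/81$ and $\sin\alpha = 8\sqrt{5}/81$. Consequently there is a one-parameter family of concentric $A$-invariant ellipses centered at $m$, namely the level sets of the positive definite quadratic form $Q$ (unique up to scale) satisfying $A^t Q A = Q$. The eigenvalue $(-79 + 8i\sqrt{5})/81$ is a root of the non-monic integer polynomial $81z^2 + 158z + 81$, hence it is not an algebraic integer and therefore not a root of unity; so $A$ generates an irrational rotation, and every $r$-orbit other than $\{m\}$ is dense on its invariant ellipse. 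A point $V$ then belongs to $\pi_{a}(\sigma_v)$ if and only if the whole $r$-orbit of $V$ stays in the open triangle $BCD$, which by density is equivalent to the invariant ellipse through $V$ being contained in that triangle. The union of such ellipses is exactly the interior of the biggest $A$-invariant ellipse of center $m$ inscribed in face $a$.

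The main obstacle is establishing that $A$ generates an irrational rotation, since it is this fact that upgrades ``$r$-orbit inside face $a$'' from a pointwise condition to an ellipse-inclusion condition; the rest is a mechanical computation once the affine recipe for $r$ is fixed.
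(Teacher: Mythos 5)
Your derivation of the affine form of $r$ --- writing $r(m') = s_v(m') + \lambda(m')\,u$ with $\lambda$ fixed by the requirement that the return point lie in the plane of face $a$, so that $r$ is affine and has the periodic point $m$ as its unique fixed point --- is exactly the route the paper takes (it is ``the calculus of Section 6'' invoked in the printed proof); like the paper, you leave the evaluation of $A$ and $B$ in the chosen planar basis as a mechanical substitution, so neither text actually verifies the displayed entries. Where you genuinely go beyond the paper is the second assertion: the printed argument (deferred to the proof of Theorem 11) merely observes that $r$-orbits lie on invariant ellipses and concludes at once that $\pi_{a}(\sigma_v)$ is the interior of the biggest inscribed one, whereas you supply the missing ingredient --- $\det A=1$, $|\mathrm{tr}\,A|=158/81<2$, and the eigenvalue $(-79+8i\sqrt{5})/81$ is a root of the primitive non-monic integer polynomial $81z^2+158z+81$, hence not an algebraic integer and so not a root of unity --- which makes every nontrivial orbit dense in its invariant ellipse and turns ``the orbit stays inside'' into ``the whole ellipse stays inside''. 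This step is genuinely needed (for a rational rotation the orbit is finite and the equivalence fails), so your version closes a gap the paper leaves open. One caveat applies to your write-up and the paper alike: both identify membership in $\pi_{a}(\sigma_v)$ with the $r$-orbit remaining in the open triangle $BCD$, silently discarding the conditions that the intermediate bounce points lie in the interiors of faces $b$, $c$, $d$; strictly one should intersect with those (open, convex, $r$-pulled-back) constraints or argue that they are implied by the return points staying in face $a$.
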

\begin{proof}
If $m$ is a point of the face $a$, the calculus of Section 6 shows that
$$rm=\frac{1}{81}\begin{pmatrix}
-79x -8y +16\sqrt{2}\\
66x-21y+42z+\frac{3\sqrt{2}}{2}\\
13x+29y-58z\frac{33\sqrt{2}}{12}\\
\end{pmatrix}$$
Now we compute $m$ and $rm$ in the basis of the face $a$. We
obtain the matrices $A,B$.
\end{proof}
\subsection{Proof of Theorem 11}
We can verify that the periodic point
$\frac{\sqrt{2}}{20}\begin{pmatrix}2\\2\\1\end{pmatrix}$ is fixed
by $r$. Indeed in this basis, it becomes
$\frac{\sqrt{2}}{20}\begin{pmatrix}-2\\1\end{pmatrix}$. Now the
orbit of a point under $r$ is contained on an ellipse related to
the matrix $A$. This shows that the set $\pi_{a}(\sigma_v)$ is the
biggest ellipse included in the triangle. And an obvious
computation shows that only one point is fixed by $r$.
\bibliography{docbib}
\end{document}